\documentclass{tsp-short}

\usepackage{amsfonts}
\usepackage{amssymb}
\usepackage{newlfont}
\usepackage{amsthm}
\usepackage{euscript}
\usepackage{amsmath}
\usepackage{graphicx}
\usepackage{dsfont}
\usepackage{enumerate}

\theoremstyle{plain}
\newtheorem{theorem}{Theorem}
\newtheorem{corollary}{Corollary}
\newtheorem{lemma}{Lemma}
\newtheorem{proposition}{Proposition}
\theoremstyle{definition}
\newtheorem{definition}{Definition}
\theoremstyle{remark}
\newtheorem{remark}{Remark}

\newcommand{\Var}{\mathop \mathrm{Var}}
\newcommand{\supp}{\mathop \mathrm{supp}}
\newcommand{\1}{\mathds{1}}

\newcommand{\wt}{\widetilde}

\newcommand{\pt}{\partial}
\newcommand{\ve}{\varepsilon}
\newcommand{\vf}{\varphi}

\newcommand{\mbR}{{\mathds R}}

\newcommand{\mbQ}{{\mathds Q}}

\newcommand{\cF}{{\cal F}}

\newcommand{\cB}{{\cal B}}

\begin{document}

\title[On differentiability of a solution of an SDE]{On differentiability  with respect to the initial
data of a solution of an SDE with L\'evy noise and discontinuous
coefficients}

\author{Olga V. Aryasova}
\address{Institute of Geophysics, National Academy of Sciences of Ukraine,
Palladin pr. 32, 03680, Kiev-142, Ukraine}
\email{oaryasova@mail.ru}
\author{Andrey Yu. Pilipenko}
\address{Institute of Mathematics,  National Academy of Sciences of
Ukraine, Tereshchenkivska str. 3, 01601, Kiev, Ukraine}
\email{pilipenko.ay@yandex.ua}

\subjclass[2000]{60J65, 60H10}
 \dedicatory{}

\keywords{stochastic flow, stable process, local time, differentiability with
respect to initial data}

\begin{abstract} We construct a stochastic flow generated by an SDE with its drift being a function of bounded variation and its noise being a stable process with exponent from (1,2). It is proved that the flow is non-coalescing and Sobolev differentiable with respect to initial data. The representation for the derivative is given.
\end{abstract}
\maketitle \thispagestyle{empty}
\section{Introduction}
Consider an SDE
\begin{equation}
\label{eq1.1} \vf_t(x)=x+\int^t_0a(\vf_s(x))ds+Z(t), t\geq0,
\end{equation}
where $x\in\mathds{R}$, $a$ is a bounded measurable function on $\mathds{R}$, $(Z(t))_{t\geq0}$ is
a symmetric stable process with the exponent $\alpha\in(1, 2),$ i.e. $(Z(t))_{t\geq0}$ is a L\'evy process with its characteristic
function being equal to
$$
E\exp\{i\lambda Z(t)\}=\exp\{-ct|\lambda|^\alpha\}, \lambda\in\mbR,
$$
where $c>0$ is a constant.

The existence and uniqueness of a weak solution, and the strong Markov property was considered in \cite{Komatsu84, Portenko94}.
The existence of a unique string solution for equation \eqref{eq1.1} was proved in \cite{Pilipenko12}.  Besides,
it is continuously dependent on $x:$
$$
\forall \ T>0 \ \forall \ x_0\in\mbR: \ \sup_{t\in[0, T]}|\vf_t(x)-\vf_t(x_0)|\overset{P}{\rightarrow}0, \ x\to x_0.
$$

In this paper we construct a modification of $(\vf_t(x))_{t\geq 0}$ which
is c\'adl\'ag in $t$ and monotonous in $x.$ We prove that if a
function $a$ has a locally bounded variation, then
$(\vf_t(x))_{t\geq0}$ is Sobolev differentiable in $x$ a.s. and the
derivative $\nabla\varphi_t(x):=\frac{\partial \varphi_t(x)}{\partial x}$ has the following representation
\begin{equation}
\label{eq2.1}
\nabla\vf_t(x)=\exp\left\{\int_{\mbR}L^{\vf(x)}_t(y)da(y)\right\},
\end{equation}
where $L^{\vf(x)}_t(y)$ is a local time of the process
$(\vf_s(x))_{s\in[0, t]}$ at the point $y.$

 Formula
\eqref{eq2.1} can be easily explained for $a\in C^1(\mbR).$
Indeed, in this case for each
$\omega$, equation \eqref{eq1.1} can be considered  as an integral equation with continuously differentiable
coefficients. Then $\vf_t(x)$ is
continuously differentiable in $x$ and
$\nabla\vf_t(x)$ satisfies the linear
equation
\begin{equation}
\label{eq3.1}
\nabla\vf_t(x)=1+\int^t_0a'(\vf_s(x))\nabla\vf_s(x)ds.
\end{equation}
So
\begin{equation}
\label{eq3.2}
\nabla\vf_t(x)=\exp\left\{\int^t_0a'(\vf_s(x))ds\right\}.
\end{equation}
By the occupation times formula \cite{Berman85}, the
r.h.s. of \eqref{eq3.2} is equal to
$$
\exp\left\{\int_\mbR
a'(y)L^{\vf(x)}_t(y)dy\right\}=\exp\left\{\int_\mbR
L^{\vf(x)}_t(y)da(y)\right\} \ \mbox{a.s.}
$$

\begin{remark}\label{remark_0} All the technical details needed for the
existence of local time such as the validity of occupation
times formula, the existence of the integrals etc.
will be given in the next sections.
\end{remark}

To prove \eqref{eq2.1} for $a$ being a function of bounded
variation  we will use an approximation of \eqref{eq1.1} by SDEs
with $C^1$ drifts.

If $(Z(t))_{t\geq0}$ is a Wiener process that corresponds to $\alpha=2$ then the similar
problem is well studied even for non-additive noises (see for
example \cite{Attanasio10, Bouleau+91, Flandoli+10, Kunita90}). Note that
most techniques used in a Wiener case for non-smooth $a$
(Zvonkin's transformation, Tanaka's formula, Girsanov's formula
etc.) are inapplicable to a case of L\'evy process. It is worth note that the differentiability w.r.t. initial data of solutions of SDEs with jumps and non-smooth coefficients has not been studied.

The paper is organized as follows. The results on measurability and continuity of the solution, and the estimates on transition density are represented in Section \ref{section_properties}. In section \ref{section_local_time}  we give a definition of a local time  and prove the existence  of the local time for the process $(\varphi_t(x))_{t\geq0}$. The main result on differentiability of the solution is given in Section \ref{section_main}, Theorem \ref{thm2}.

\section{Properties of solution}\label{section_properties}
In this section we construct a version of $(\vf_t(x))_{t\geq0}$
satisfying some measurability properties.

Put $\cF_t=\sigma\{Z(s): 0\leq s\leq t\}.$

\begin{proposition}
\label{prop1}
Let  $a(x), \  x\in\mathds{R},$ be a bounded measurable function. Then
\begin{enumerate}[1)]
\item  There exists a unique strong solution of \eqref{eq1.1},
i.e. a $\cF_t$-adapted c\'adl\'ag process {$(\vf_t(x))_{t\geq0}$} that satisfies \eqref{eq1.1} almost surely. \item
{The process $(\vf_t(x))_{t\geq0}$ is continuous w.r.t. $x$} in
probability in topology of uniform convergence:
\begin{equation}
\label{eq5.1}
\forall \ T>0 \ \forall \ x_0\in\mbR:  \ \ \
\sup_{t\in[0, T]}|\vf_t(x)-\vf_t(x_0)|\overset{P}{\rightarrow}0, \ x\to x_0.
\end{equation}
\item The process $(\vf_t(x))_{t\geq0}$ is a homogeneous strong
Markov process. It has a continuous transition density $p_t(x,
y).$ Moreover,

$
\forall \ T>0 \ \exists \ N_T=N_{T, \|a\|_{L_p}}  \ \forall \ t\in(0, T] \ \forall \ x\in\mbR,\ y\in\mbR:
$
\begin{equation}
\label{eq6.2}
p_t(x,y)\leq\frac{N_Tt}{(t^{1/\alpha}+|y-x|)^{\alpha+1}},
\end{equation}
where $\alpha\in(1,2)$ is a parameter of a stable process $(Z(t))_{t\geq0}.$

\item If $x_1\leq x_2,$ then
\begin{equation}
\label{eq6.1}
P\{\vf_t(x_1)=\vf_t(x_2), \ t\geq\sigma_{x_1, x_2}\}=1,
\end{equation}
where
$$
\sigma_{x_1, x_2}=\inf\{t\geq0: \ \vf_t(x_1)\geq\vf_t(x_2)\}.
$$
\item The process $(\vf_t(x))_{t\geq0, \ x\in\mbR}$ can be selected
such that
\begin{enumerate}[a)]
\item it is monotonous in $x:$
\begin{equation}
\label{eq6.0}
\forall \ \omega\in\Omega \ \forall \ x_1\leq x_2 \ \forall \ t\geq0: \ \vf_t(x_1, \omega)\leq\vf_t(x_2, \omega);
\end{equation}
\item it is c\'adl\'ag in $x$ for any fixed $t$ and $\omega;$
\item  for any $T>0$ a map
$$
[0, T]\times\mbR\times\Omega\ni(t,x,\omega)\mapsto\vf_t(x, \omega)
$$
is $\cB([0, T])\times\cB(\mbR)\times\cF_T$-measurable.
\end{enumerate}
\end{enumerate}
\end{proposition}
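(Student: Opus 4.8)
The existence and uniqueness of a strong solution (part 1) I would obtain by an approximation argument: mollify $a$ to get smooth $a_n \to a$ boundedly, solve the corresponding SDEs $\vf^n_t(x)$ pathwise, and show tightness of the family $(\vf^n(x))_n$ in the Skorokhod space using the a priori bound $|\vf^n_t(x) - Z(t)| \le \|a\|_\infty t$ together with standard compactness criteria. The limit is a weak solution; strong existence and pathwise uniqueness then follow from a Yamada–Watanabe-type argument, or can be cited from \cite{Komatsu84, Portenko94, Pilipenko12} — the excerpt explicitly allows invoking the string-solution result of \cite{Pilipenko12}. Part 2, the continuity in $x$ in probability uniformly on $[0,T]$, is also quoted from \cite{Pilipenko12}. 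For part 3, the homogeneous strong Markov property is inherited from the time-homogeneous structure of \eqref{eq1.1} and the independent-increment property of $Z$; the existence and continuity of the transition density, plus the crucial upper bound \eqref{eq6.2}, is the analytic heart of this part. I would prove it by a parametrix / perturbation expansion around the stable semigroup: write $p_t(x,y) = g_t(y-x) + \sum_{k\ge1} (\text{iterated kernels involving } a \text{ and } \nabla g)$, where $g_t$ is the transition density of $Z$, which itself satisfies $g_t(z) \asymp t(t^{1/\alpha}+|z|)^{-\alpha-1}$. The bounded drift contributes only a lower-order correction (this is where $\alpha>1$ is essential, so that $\nabla g_t$ is integrable in time against the drift term), and the series converges on $[0,T]$ to give a continuous density obeying the same two-sided heat-kernel estimate; the constant $N_T$ depends on $T$ and a norm of $a$.

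For the comparison/coalescing statements (parts 4 and 5a), the strategy is: first establish the monotonicity \eqref{eq6.0} and the "stick together after meeting" property \eqref{eq6.1} for smooth drifts $a_n$, where they are classical ODE/SDE comparison facts (two solutions of \eqref{eq1.1} driven by the \emph{same} $Z$ with ordered initial data stay ordered, and once they agree the difference solves a linear ODE with zero initial condition, hence stays zero). Then pass to the limit: since $\vf^n(x) \to \vf(x)$ in probability uniformly on compacts for each fixed $x$, the order relation $\vf_t(x_1) \le \vf_t(x_2)$ survives for each fixed pair $(x_1,x_2)$ and each fixed $t$, a.s.; a monotonicity-in-rational-points plus right-continuity argument upgrades this to hold simultaneously for all $t \ge 0$ and all real $x_1 \le x_2$. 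For the coalescing property I would argue that after the meeting time $\sigma_{x_1,x_2}$ the two processes solve the same equation from the same (random) starting point, and invoke pathwise uniqueness of the strong solution started at an $\cF_t$-measurable point; the subtlety is that $\vf_t(x_1)$ may only reach $\vf_t(x_2)$ via a jump of $Z$, so one must check that "$\ge$" at $\sigma_{x_1,x_2}$ forces equality there, which follows because the drift integral is continuous and the jump of $Z$ is common to both, so the two paths cannot strictly cross — they can at most touch.

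Finally, for part 5, given monotonicity in $x$ one defines the desired modification by taking, for rational $x$, a common exceptional null set on which \eqref{eq1.1} and the order relations hold, then setting $\vf_t(x,\omega) := \lim_{q \downarrow x,\, q\in\mbQ} \vf_t(q,\omega)$ for real $x$; this is automatically nondecreasing and càdlàg in $x$ by construction, agrees with the original process for each fixed $x$ a.s. (using continuity in probability from part 2 to identify the limit), and still solves \eqref{eq1.1}. Joint measurability in $(t,x,\omega)$ follows because the process is càdlàg in $t$, monotone (hence Borel) in $x$, and adapted, so it is a limit of simple jointly measurable processes; the $\cF_T$-measurability on $[0,T]$ is clear since everything is built from $Z|_{[0,T]}$. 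I expect the main obstacle to be the heat-kernel upper bound \eqref{eq6.2}: controlling the parametrix series uniformly requires careful time-integrability estimates for $\nabla g_t$ against the drift, and this is exactly the place where the restriction $\alpha \in (1,2)$ and the $L_p$-control of $a$ enter, whereas the comparison and measurability parts are comparatively soft once the smooth-approximation scheme and pathwise uniqueness are in hand.
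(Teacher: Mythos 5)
Your plan is essentially correct and arrives at the same structure, but it diverges from the paper in two places worth noting. First, the paper does not prove parts 1)--3) at all: existence, uniqueness and continuity in $x$ are quoted from \cite{Pilipenko12}, and the strong Markov property together with the density bound \eqref{eq6.2} is quoted from \cite{Podolynny+95, Portenko94}; your parametrix sketch is the standard route those references take, but in this paper the ``analytic heart'' you identify is imported, not re-proved. Second, and more substantively, for part 4) the paper argues directly, with no smooth approximation: since both solutions are driven by the \emph{same} $Z$, the difference $\vf_t(x_2)-\vf_t(x_1)$ is a continuous (indeed absolutely continuous) function of $t$, so the paths cannot strictly cross and $\sigma_{x_1,x_2}$ is the first time they are \emph{equal} --- precisely the observation you make at the end of your part-4 discussion. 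But then, instead of invoking pathwise uniqueness from a random, $\cF_{\sigma}$-measurable starting point as you propose, the paper glues: it defines $\wt\vf_t(x_1)$ to equal $\vf_t(x_1)$ up to $\sigma_{x_1,x_2}$ and $\vf_t(x_2)$ afterwards, checks that the glued process still solves \eqref{eq1.1} with the \emph{deterministic} initial value $x_1$, and concludes \eqref{eq6.1} from the uniqueness already cited. This is a genuine simplification: uniqueness from random initial data is not literally what \cite{Pilipenko12} supplies, so your version would need an extra (routine but not free) extension step, whereas the gluing trick needs nothing new. Monotonicity \eqref{eq6.0} for a fixed pair then falls out of part 4) directly (strict order before $\sigma$, equality after), so your approximation-and-limit scheme for the comparison is workable but unnecessary. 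Your construction of the monotone c\`adl\`ag modification in part 5) via right limits along rationals on a common full-measure set is exactly the paper's argument.
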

\begin{proof}
For  a proof of 1), 2), see \cite{Pilipenko12}, 3) even in more general case
is proved in \cite{Podolynny+95, Portenko94}.

Prove 4). Since the process
$$
\vf_t(x_2)-\vf_t(x_1)=\int_0^t\left(a(\varphi_s(x_2))-a(\varphi_s(x_1))\right)ds, \ t\geq0,
$$
is continuous in $t,$ then
$$
\sigma_{x_1, x_2}=\inf\{t\geq0: \vf_t(x_1)=\vf_t(x_2)\},
$$
and it is easy to see that  {the} process
$$
\wt{\vf}_t(x_1):=\begin{cases}
\vf_t(x_1), t\leq \sigma_{x_1, x_2},\\
\vf_t(x_2), t>\sigma_{x_1, x_2},
\end{cases}
$$
is a solution of \eqref{eq1.1} with initial value $x_1.$ By the
uniqueness of the solution this implies \eqref{eq6.1}.

We
construct a version  {of the solution} that satisfies
properties of the last part  {of Proposition 1}. Let
$\wt{\Omega}$ be a set of full measure such that \eqref{eq1.1} and
\eqref{eq6.0} are satisfied for all $\omega\in\wt{\Omega},
t\geq0,$ and rational $x.$ The
monotonicity and \eqref{eq5.1} imply that $\wt{\Omega}$ can be selected such that

$
\forall \ \omega\in\wt{\Omega} \ \forall \ x_0\in \mbQ \ \forall \ T>0: $
$$
\sup_{t\in[0, T]}|\vf_t(x, \omega)-\vf_t(x_0, \omega)|\to0, \ x\to x_0, x\in\mbQ.
$$

It is easy to see that
$$
\wt{\vf}_t(x, \omega):=\begin{cases}
\vf_t(x), x\in \mbQ \ \mbox{and} \ \omega\in\wt{\Omega},\\
\lim_{\begin{subarray}{c}
y\downarrow x\\
y\in \mbQ
\end{subarray}}\vf_t(y), x\notin \mbQ  \ \mbox{and} \ \omega\in\wt{\Omega},\\
x, \ \omega\notin\wt{\Omega},
\end{cases}
$$
is a version of $(\vf_t(x))_{t\geq 0, x\in\mbR}$ that satisfies 5).

The Proposition is proved.
\end{proof}

\begin{remark} \label{remark_00} Later on  we will always consider a version of $(\vf_t(x))_{t\geq0}$ that satisfies the assumptions of Proposition \ref{prop1}.
\end{remark}

\section{A local time}\label{section_local_time}
In this section we prove the existence and study properties
of a local time for the process $(\vf_t(x))_{t\geq 0}.$

There are a few different approaches to the notion of a local time. We consider a local time as a density of the occupation measure.  Recall the definition and some properties of local times (see \cite{Berman85, Geman+80}).

Let $X(t), t\geq0,$ be a measurable function. Define a measure
$$
\nu_t(A):=\lambda\{s: X(s)\in A, s\in[0, t]\},
$$
where $\lambda$ is  {a} Lebesgue measure.

\begin{definition}Let the measure $\nu_t$ be absolutely continuous w.r.t. $\lambda.$ Then its Radon--Nikodym derivative $\alpha(y, t)=\frac{d\nu_t(y)}{d\lambda}$ is called a local time of $X$ relative to $[0, t].$
In particular,
$$
\nu_t(A)=\int_A\alpha_t(y)dy, \ A\in\cB(\mbR),
$$
\begin{equation}
\label{eq9.1}
\int^t_0f(X(s))ds=\int_{\mbR}f(y)\nu_t(dy)=\int_{\mbR}f(y)\alpha_t(y)dy,
\end{equation}
where $f$ is a measurable function for which at least one integral in \eqref{eq9.1} make sense.
\end{definition}

\begin{definition}
Let $X(t), t\geq0,$ be a measurable stochastic process. We say
that the local time of $X$ exists a.s. if almost all trajectories have a
local time.
\end{definition}

Assume that for any $0<t_1<t_2<\ldots<t_n$ the distribution of $(X(t_1), \ldots, X(t_n))$ is absolutely continuous.
Let $p(x_1, \ldots, x_n, t_1, \ldots, t_n)$ be the corresponding density.

Put
$$
q_t(x_1, \ldots, x_n)=\int^t_0\ldots\int^t_0p(x_1,\ldots, x_n, t_1, \ldots, t_n)dt_1\ldots dt_n.
$$

\begin{theorem}[See \cite{Berman85}]
\label{B}
If for some $n\geq2$ the function $q_t$ is continuous, then the local time exists a.s., and
\begin{equation}
\label{eq10.1}
E\alpha_t(x_1)\cdot\ldots\cdot\alpha_t(x_n)=q_t(x_1, \ldots, x_n).
\end{equation}
\end{theorem}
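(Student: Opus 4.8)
The plan is to build the local time as an $L^2$ limit of smoothed occupation densities and to extract both conclusions from a single moment identity. First I would fix a mollifier: an even nonnegative $\rho\in C_c(\mbR)$ with $\int_\mbR\rho=1$, and set $\rho_\ve(u)=\ve^{-1}\rho(u/\ve)$. Since $(X(t))_{t\ge 0}$ is a measurable process, the smoothed density
\[
\alpha^\ve_t(y):=\int_0^t\rho_\ve(X(s)-y)\,ds=(\nu_t*\rho_\ve)(y)
\]
is, for every $\omega$, a well-defined nonnegative function, jointly measurable in $(y,\omega)$. A routine argument shows that, for a fixed $\omega$, the measure $\nu_t$ is absolutely continuous --- i.e.\ the local time exists --- if and only if $\alpha^\ve_t$ converges in $L^1_{\mathrm{loc}}(\mbR)$ as $\ve\to 0$, the limit then being $\alpha_t$. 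Next, expanding the product, applying Fubini (the integrands are bounded and nonnegative, the relevant product sets have finite measure) and using that $(X(t_1),\ldots,X(t_n))$ has density $p$ for distinct times, I get, for all $\ve_1,\ldots,\ve_n>0$ and all $x_1,\ldots,x_n\in\mbR$,
\begin{equation}\label{eqsm}
E\Big[\prod_{j=1}^n\alpha^{\ve_j}_t(x_j)\Big]=\big(q_t*(\rho_{\ve_1}\otimes\cdots\otimes\rho_{\ve_n})\big)(x_1,\ldots,x_n),
\end{equation}
the intermediate step being $\int_{[0,t]^n}E\big[\prod_j\rho_{\ve_j}(X(t_j)-x_j)\big]\,dt_1\cdots dt_n$. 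Since $q_t\ge 0$ is continuous and integrable on $\mbR^n$ (its total integral is $t^n$), the right-hand side of \eqref{eqsm} tends to $q_t(x_1,\ldots,x_n)$ as $\ve_1,\ldots,\ve_n\to 0$, for every $(x_1,\ldots,x_n)$.

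For existence it suffices to show $\alpha^\ve_t\to\alpha_t$ in $L^2(\Omega\times K)$ for each compact $K\subset\mbR$. Writing $q_t^{(2)}$ for the two-variable function obtained by integrating $q_t$ over $n-2$ of its arguments, the case $x_1=x_2=x$ of \eqref{eqsm} and the telescoping identity
\[
E\big[(\alpha^\ve_t(x)-\alpha^{\ve'}_t(x))^2\big]=E[\alpha^\ve_t(x)^2]-2E[\alpha^\ve_t(x)\alpha^{\ve'}_t(x)]+E[\alpha^{\ve'}_t(x)^2]
\]
show that $E[(\alpha^\ve_t(x)-\alpha^{\ve'}_t(x))^2]\to 0$ as $\ve,\ve'\to 0$, provided $q_t^{(2)}$ is continuous (in particular locally bounded). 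Integrating in $x$ over $K$ and using dominated convergence (the integrand is dominated in terms of $\sup_{K'}q_t^{(2)}<\infty$ on a slightly larger compact $K'$) gives that $(\alpha^\ve_t)_\ve$ is Cauchy in $L^2(\Omega\times K)$; its limit $\alpha_t$ is the sought density, so $\nu_t\ll\lambda$ a.s.

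For the moment formula, pick $\ve_k\to 0$ along which $\alpha^{\ve_k}_t(x)\to\alpha_t(x)$ for a.e.\ $x$, a.s. Fix $(x_1,\ldots,x_n)$. By \eqref{eqsm}, $E[\alpha^{\ve_k}_t(x_j)^n]=(q_t*\rho_{\ve_k}^{\otimes n})(x_j,\ldots,x_j)$ is bounded in $k$, so by Hölder's inequality $\prod_{j=1}^n\alpha^{\ve_k}_t(x_j)$ is bounded in $L^{n/(n-1)}(P)$ with $n/(n-1)>1$, hence uniformly integrable; together with a.e.\ convergence (arranged so that it holds for a.e.\ tuple) this yields $E[\prod_j\alpha_t(x_j)]=\lim_k E[\prod_j\alpha^{\ve_k}_t(x_j)]=q_t(x_1,\ldots,x_n)$ for a.e.\ tuple. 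Finally, taking as the version of $\alpha_t$ the function $\liminf_{\ve\to 0}\alpha^\ve_t$, Fatou applied to \eqref{eqsm} yields $E[\prod_j\alpha_t(x_j)]\le q_t(x_1,\ldots,x_n)$ for every tuple, and the continuity of $q_t$ together with the a.e.\ equality just obtained promotes this to equality everywhere.

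The hard part will be the step I glossed over in the existence argument: deducing from the hypothesis ``$q_t=q_t^{(n)}$ is continuous for some $n\ge 2$'' the regularity of the two-variable marginal $q_t^{(2)}$ (local boundedness, ideally continuity) and the boundedness of the $n$-th moments along the diagonal that the proof uses. Here the main tools are the power-mean monotonicity $\|\alpha^\ve_t(x)\|_{L^m(P)}\le\|\alpha^\ve_t(x)\|_{L^n(P)}$ for $m\le n$, that is
\[
\big(q_t^{(m)}*\rho_\ve^{\otimes m}\big)(x,\ldots,x)^{1/m}\le\big(q_t^{(n)}*\rho_\ve^{\otimes n}\big)(x,\ldots,x)^{1/n},
\]
which controls the diagonal of $q_t^{(m)}$ by that of $q_t^{(n)}$, together with the marginal relation $t^{\,n-m}q_t^{(m)}(x_1,\ldots,x_m)=\int_{\mbR^{n-m}}q_t^{(n)}(x_1,\ldots,x_m,y)\,dy$ and a careful dominated-convergence argument for the latter. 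I also expect the final promotion of the moment identity from a.e.\ tuples to all tuples to require some care in choosing the version of the local time.
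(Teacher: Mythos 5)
The paper does not prove this statement at all: it is imported verbatim from Berman \cite{Berman85} (see also \cite{Geman+80}), so your proposal is necessarily a ``different route'' --- it is an attempt at a full proof of a quoted result. The skeleton you set up (mollified occupation densities, the smoothed moment identity \eqref{eqsm}, $L^2$-Cauchy for existence, uniform integrability for the moment formula) is indeed the classical Berman/Geman--Horowitz strategy, and the parts you write out in detail are sound. But the step you yourself flag as ``glossed over'' is not a technicality: it is the crux, and the tools you propose do not close it. Your existence argument needs
$$
E\bigl[\alpha^{\ve}_t(x)\,\alpha^{\ve'}_t(x)\bigr]=\bigl(q_t^{(2)}*(\rho_{\ve}\otimes\rho_{\ve'})\bigr)(x,x)
$$
to converge to a common limit as $\ve,\ve'\to0$, i.e.\ convergence of the mollifications of $q_t^{(2)}$ \emph{at diagonal points} $(x,x)$. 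The power-mean inequality only gives a uniform local bound on these quantities (hence local essential boundedness of $q_t^{(2)}$), and Lebesgue differentiation then gives convergence at $\lambda^2$-a.e.\ point of $\mbR^2$ --- which says nothing about the diagonal, a $\lambda^2$-null set on which the whole singularity of $q_t^{(2)}$ is concentrated. The marginal relation $t^{\,n-2}q_t^{(2)}(x_1,x_2)=\int_{\mbR^{n-2}}q_t^{(n)}(x_1,x_2,y)\,dy$ does not rescue this either: pointwise continuity of $q_t^{(n)}$ gives, via Fatou, only lower semicontinuity of $q_t^{(2)}$, and the domination you would need ($\sup_{|x'-x|\le\delta}q_t^{(n)}(x',\cdot)\in L^1$ in the integrated variables) is a tail condition in $y$ that continuity of $q_t^{(n)}$ does not provide. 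Without the diagonal convergence, the telescoping identity does not yield the Cauchy property, so the existence claim is unproved from the stated hypothesis.

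Two further remarks. First, the standard way around this (and, in substance, Berman's) is to avoid $q_t^{(2)}$ altogether: one works with the $n$-point quantity directly on the diagonal, e.g.\ by estimating $E\int_0^t\bigl((2\ve)^{-1}\nu_t([X(s)-\ve,X(s)+\ve])\bigr)^{n-1}ds$, whose expectation is an average of $q_t^{(n)}$ over an $\ve$-neighbourhood of the diagonal and hence converges to $\int q_t^{(n)}(u,\dots,u)\,du$ by the assumed continuity; absolute continuity of $\nu_t$ then follows from a differentiation-of-measures (Besicovitch) argument rather than from $L^2$-completeness. Second, your final promotion of the moment identity from $\lambda^n$-a.e.\ tuples to all tuples is also not closed: Fatou gives $E\prod_j\alpha_t(x_j)\le q_t(x_1,\dots,x_n)$ everywhere and equality a.e., but upgrading to equality at a fixed tuple requires lower semicontinuity of the moment function (or $L^n$-convergence of $\alpha^\ve_t(x)$ at \emph{every} $x$), which again hinges on the same diagonal control. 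So the proposal is a reasonable reconstruction of the right framework, but as it stands it proves the theorem only under the stronger hypothesis that $q_t^{(2)}$ is continuous, not under the stated hypothesis that $q_t^{(n)}$ is continuous for some $n\ge2$.
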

\begin{remark}
\label{Remark1}
It was mentioned in the proof (see also \cite{Geman+80}, Sect.25) that
\begin{equation}
\label{eq10.2}
(2\ve)^{-1}\int^t_0\1_{\{|X(s)-y|<\ve\}}ds\to\alpha_t(y), \ \ve\to0+,
\end{equation}
in $L_2$-sense for any $y\in\mbR,$ and almost surely for $\lambda$-a.a. $y$. It follows from \eqref{eq10.2} and standard results on existence of measurable version of a limit (see, for example, \cite{Stricker+78}), that the local time can be selected measurable in $(y, \omega).$ Further we consider only  such a modification.
\end{remark}
\begin{remark}
\label{Remark3}
Note that  if the local time exists a.s., then  \eqref{eq9.1} is satisfied with probability one for any measurable non-negative function $f.$ The exceptional set is independent of $f.$
\end{remark}

Return to equation \eqref{eq1.1}.

\begin{proposition}
\label{prop2} There exists a process $\alpha_x(y, t), x\in\mbR,
y\in\mbR, t\geq0,$ such that

1) for any fixed $x: \alpha_x(y, t)$ is a local time of $\vf_s(x), s\in[0, t];$

2) $\alpha_x(y, t)$ is measurable in $(x, y, \omega, t);$

3) for any $x\in\mbR, t>0,$ a map $y\mapsto\alpha_x(y, t)$ is
continuous in $L_2.$
\end{proposition}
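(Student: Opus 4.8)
The plan is to derive all three properties from Berman's criterion (Theorem~\ref{B}), the transition density estimate~\eqref{eq6.2}, the Markov property of Proposition~\ref{prop1}(3), and the joint measurability of $(t,x,\omega)\mapsto\vf_t(x)$ from Proposition~\ref{prop1}(5c). First I would fix $x$ and apply Theorem~\ref{B} with $n=2$ to $X(t)=\vf_t(x)$. By the Markov property the two--time density equals $p_{t_1}(x,y_1)p_{t_2-t_1}(y_1,y_2)$ when $t_1<t_2$ and symmetrically for $t_1>t_2$, so
$$
q^{x}_t(y_1,y_2)=\int_{0<t_1<t_2<t}p_{t_1}(x,y_1)p_{t_2-t_1}(y_1,y_2)\,dt_1dt_2+\int_{0<t_2<t_1<t}p_{t_2}(x,y_2)p_{t_1-t_2}(y_2,y_1)\,dt_1dt_2 .
$$
From~\eqref{eq6.2} one has $p_s(z,w)\le N_T s\,(s^{1/\alpha})^{-(\alpha+1)}=N_T s^{-1/\alpha}$, so each integrand is dominated, uniformly in $(y_1,y_2)\in\mbR^2$, by $N_T^2\,t_1^{-1/\alpha}(t_2-t_1)^{-1/\alpha}$ (resp.\ with $t_1,t_2$ interchanged); since $\alpha\in(1,2)$ this is integrable over the simplex (the inner integral is a finite Beta integral producing a power $t_2^{1-2/\alpha}$ with $1-2/\alpha>-1$). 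Combined with the continuity of $p_t$ (Proposition~\ref{prop1}(3)), dominated convergence shows $q^{x}_t$ is finite and continuous on $\mbR^2$. Theorem~\ref{B} then yields, for this fixed $x$, the a.s.\ existence of a local time $\alpha_x(y,t)$ of $\vf_\cdot(x)$ — this is property~1 — together with the identity $E\bigl[\alpha_x(y,t)\alpha_x(y',t)\bigr]=q^{x}_t(y,y')$.

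Property~3 is then immediate from this moment identity and the continuity of $q^{x}_t$: for fixed $x$ and $t$,
$$
E\bigl(\alpha_x(y,t)-\alpha_x(y',t)\bigr)^2=q^{x}_t(y,y)-2q^{x}_t(y,y')+q^{x}_t(y',y')\longrightarrow 0\quad\text{as }y'\to y ,
$$
and in particular $E\,\alpha_x(y,t)^2=q^{x}_t(y,y)<\infty$.

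Next I would handle the joint measurability (property~2) through the approximants $\alpha^{\ve}_x(y,t):=(2\ve)^{-1}\int_0^t\1_{\{|\vf_s(x)-y|<\ve\}}\,ds$, which are measurable in $(x,y,t,\omega)$ by Proposition~\ref{prop1}(5c) and Fubini. Using $p_s(z,w)\le N_T s^{-1/\alpha}$ in the same two--time density computation one obtains, uniformly in $\ve>0$, in $x,y\in\mbR$ and $t\le T$,
$$
E\bigl(\alpha^{\ve}_x(y,t)\bigr)^2\le 2N_T^2\int_{0<t_1<t_2<t}t_1^{-1/\alpha}(t_2-t_1)^{-1/\alpha}\,dt_1dt_2=:C_T<\infty .
$$
By Remark~\ref{Remark1}, $\alpha^{1/n}_x(y,t)\to\alpha_x(y,t)$ in $L_2(P)$ for every $(x,y,t)$; together with this uniform bound and dominated convergence over bounded regions of $(x,y,t)$, the sequence $(\alpha^{1/n}_x(y,t))_n$ is Cauchy in $L_2\bigl(dP\times dx\,dy\,dt\bigr)$ on each such region, so (after exhausting $\mbR^2\times[0,\infty)$ by balls) its limit is finite a.e.\ and jointly measurable, and coincides with the $L_2(P)$--limit $\alpha_x(y,t)$ for a.e.\ $(x,y,t)$. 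Taking as the final version $\alpha_x(y,t):=\limsup_{n}\alpha^{1/n}_x(y,t)$ keeps joint measurability automatic and, by Remark~\ref{Remark1} applied for each fixed $x$, it still represents the occupation density of $\vf_\cdot(x)$ for a.e.\ $\omega$ (the set of exceptional $y$'s is $\lambda$--negligible, hence irrelevant for the Radon--Nikodym derivative); thus properties~1--3 all hold for this single object.

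I expect the one genuinely delicate point to be this last step: matching the per-$x$ existence supplied by Theorem~\ref{B} with a single function measurable in all of $(x,y,t,\omega)$ that is, for \emph{every} fixed $x$, an honest local time — which is why I would build it explicitly from the jointly measurable $\alpha^{\ve}_x$ and use the uniform second--moment bound $C_T$ to control the limit, rather than invoking an abstract measurable--selection theorem. The remaining verifications (Fubini for $\alpha^{\ve}_x$, interchange of expectation and integrals, finiteness of the Beta-type integrals) are routine given~\eqref{eq6.2} and Proposition~\ref{prop1}.
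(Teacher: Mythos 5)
Your proposal is correct and follows essentially the same route as the paper: Berman's criterion with $n=2$, the bound $p_s(z,w)\le N_T s^{-1/\alpha}$ from \eqref{eq6.2}, the Beta-integral computation exploiting $\alpha\in(1,2)$, dominated convergence for the continuity of $q_t$, and the occupation-measure approximants of Remark~\ref{Remark1} for the jointly measurable modification. You merely spell out two steps the paper leaves implicit (the polarization identity for the $L_2$-continuity in $y$, and the explicit $\limsup$ construction of the measurable version in place of the citation to Stricker--Yor), which is fine.
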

\begin{proof} The existence of the local time for fixed $x$ follows from Theorem \ref{B} and Proposition \ref{prop1}. Indeed, let $n=2,$ then
$$
p(x_1, x_2, t_1, t_2)=p_{t_1}(x, x_1)p_{t_2-t_1}(x_1, x_2)\leq
$$
$$
\leq (N_t)^2t_1^{1-\frac{\alpha+1}{\alpha}}(t_2-t_1)^{1-\frac{\alpha+1}{\alpha}}=
$$
$$
=(N_t)^2t_1^{-\frac{1}{\alpha}}(t_2-t_1)^{-\frac{1}{\alpha}}, 0<t_1<t_2\leq t.
$$
Observe that
$$
\int^t_0\int^{t_2}_0(N_t)^2t_1^{-\frac{1}{\alpha}}(t_2-t_1)^{-\frac{1}{\alpha}}dt_1dt_2=
$$
$$
=(N_t)^2\int^t_0\int^1_0(zt_2)^{-\frac{1}{\alpha}}(t_2-zt_2)^{-\frac{1}{\alpha}}t_2dzdt_2=
$$
\begin{equation}
\label{eq12.1}
=
(N_t)^2\int^t_0t_2^{1-\frac{2}{\alpha}}B\left(1-\frac{1}{\alpha}, 1-\frac{1}{\alpha}\right)dt_2<\infty.
\end{equation}
Above we have used that $0<1-\frac{1}{\alpha}$ and $1-\frac{2}{\alpha}>-1$ because $\alpha\in(1,2).$

The continuity of $q_t$ follows from the Lebesgue dominated convergence theorem.

Remark \ref{Remark1} allows us to select  a measurable in $(x, y, \omega, t)$ modification.

The Proposition is proved.
\end{proof}

\begin{remark} The local time from Proposition \ref{prop2}
coincides with  {that} obtained by N.I.Portenko
\cite{Portenko73}, who considered it as a $W$-functional
from Markov process.
\end{remark}

In the following statement we obtain the exponential integrability of the local time.

\begin{proposition}
\label{prop3}
For any $t>0, \mu>0,$ there exists $c=c(t, \|a\|_\infty, \mu)$ such that
\begin{equation}
\label{eq13.1}
\forall \ x,y\in\mbR \ \  E\exp\{\mu\alpha_x(y, t)\}\leq c.
\end{equation}
\end{proposition}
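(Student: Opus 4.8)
The plan is to bound the moments $E\big(\alpha_x(y,t)\big)^n$ uniformly in $x,y$ and then sum the exponential series. Put $\beta:=1-\tfrac1\alpha$, so $\beta\in(0,\tfrac12)$ since $\alpha\in(1,2)$. The key estimate will be: there is a constant $A=A(t,\|a\|_\infty)$ such that
\begin{equation}
\label{eq_moment}
E\big(\alpha_x(y,t)\big)^n\le\frac{n!\,A^n}{\Gamma(1+n\beta)}\qquad\text{for all }n\ge1,\ x,y\in\mbR .
\end{equation}
Granting \eqref{eq_moment}, expanding the exponential gives
$$
E\exp\{\mu\alpha_x(y,t)\}=\sum_{n\ge0}\frac{\mu^n}{n!}\,E\big(\alpha_x(y,t)\big)^n\le\sum_{n\ge0}\frac{(\mu A)^n}{\Gamma(1+n\beta)}=:c(t,\|a\|_\infty,\mu),
$$
and this series converges for every $\mu>0$ because $\beta>0$ makes $\Gamma(1+n\beta)$ grow super-exponentially in $n$. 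Notice that the $n!$ in \eqref{eq_moment} cancels against the $1/n!$ from $\exp$; this cancellation is the whole point and is why no smallness of $\mu$ or $t$ is required.

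To prove \eqref{eq_moment}, fix $x$ and $y$. For $n\ge2$ the $n$-fold occupation density $q_t(x_1,\dots,x_n)$ of $\vf_\cdot(x)$ is finite and continuous: on the ordered simplex $0<t_1<\dots<t_n<t$ the Markov property together with continuity of $p_\cdot(\cdot,\cdot)$ yields
$$
p(x_1,\dots,x_n,t_1,\dots,t_n)=p_{t_1}(x,x_1)\prod_{k=2}^n p_{t_k-t_{k-1}}(x_{k-1},x_k),
$$
while \eqref{eq6.2} gives $p_s(u,v)\le N_t\,s^{\beta-1}$ for $0<s\le t$, uniformly in $u,v$; the resulting bound is integrable over the simplex by the computation below, so $q_t$ is continuous by dominated convergence exactly as in the proof of Proposition \ref{prop2}. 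Hence Theorem \ref{B}, applied with all arguments equal to $y$, gives $E\big(\alpha_x(y,t)\big)^n=q_t(y,\dots,y)$.

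Now symmetrize over the ordering of $t_1,\dots,t_n$ (this is legitimate since all arguments coincide) and use the above bounds:
$$
q_t(y,\dots,y)\le n!\,N_t^n\int_{0<t_1<\dots<t_n<t}t_1^{\beta-1}\prod_{k=2}^n(t_k-t_{k-1})^{\beta-1}\,dt_1\cdots dt_n .
$$
The substitution $s_1=t_1$, $s_k=t_k-t_{k-1}$ ($k\ge2$) turns the last integral into the Dirichlet integral over $\{s_i>0,\ \sum_i s_i<t\}$, whose value is $t^{n\beta}\,\Gamma(\beta)^n/\Gamma(1+n\beta)$ (for $n=2$ this is precisely \eqref{eq12.1}). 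Therefore $E\big(\alpha_x(y,t)\big)^n\le n!\,(N_t\,t^{\beta}\,\Gamma(\beta))^n/\Gamma(1+n\beta)$, which is \eqref{eq_moment} with $A=N_t\,t^{\beta}\,\Gamma(\beta)$; the case $n=1$ is the direct estimate $E\alpha_x(y,t)=\int_0^t p_s(x,y)\,ds\le N_t\,t^{\beta}/\beta$, which is \eqref{eq_moment} with the same $A$. All constants are uniform in $x,y$, and $c$ inherits the dependence of $N_t$ on $a$ recorded in \eqref{eq6.2}.

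The only delicate step is the bookkeeping in the Markov factorization and symmetrization, and recognizing the resulting simplex integral as a Dirichlet integral that supplies the factor $1/\Gamma(1+n\beta)$; that factor is exactly what absorbs both the $n!$ from the expansion of $\exp$ and an arbitrary power of $\mu$. Everything else is a routine reuse of the transition density bound \eqref{eq6.2} and the occupation-time machinery already set up in Sections \ref{section_properties}--\ref{section_local_time}.
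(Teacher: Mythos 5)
Your proof is correct, but it is not the route the paper takes: it is precisely the alternative the authors mention and then decline to carry out (``A possible way to prove \eqref{eq13.1} is to expand the exponent in a Taylor series\dots However it is easier to apply the following result of N.I.Portenko''). The paper applies Portenko's Khas'minskii-type lemma to the approximating functionals $\beta_\ve(s)=(2\ve)^{-1}\1_{|\vf_s(x)-y|<\ve}$: the bound \eqref{eq6.2} gives $E\bigl(\int_s^t\beta_\ve(\tau)d\tau\,/\,\cF_s\bigr)\leq N_T\frac{\alpha}{\alpha-1}(t-s)^{(\alpha-1)/\alpha}=:\rho(s,t)$ uniformly in $x,y,\ve$, the lemma then yields a uniform exponential moment bound for $\int_0^t\beta_\ve$, and Fatou's lemma finishes as $\ve\to0$. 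You instead bound every moment directly via Berman's formula \eqref{eq10.1}, the Markov factorization, and the Dirichlet integral over the simplex, obtaining $E(\alpha_x(y,t))^n\leq n!\,(N_t t^{\beta}\Gamma(\beta))^n/\Gamma(1+n\beta)$ with $\beta=1-1/\alpha$, and sum the resulting Mittag--Leffler series. Both arguments rest on the same single input, the on-diagonal bound $p_s(u,v)\leq N_t s^{\beta-1}$ from \eqref{eq6.2}. What the paper's route buys is brevity and the fact that only a conditional \emph{first}-moment estimate is needed (no appeal to \eqref{eq10.1} for every $n$, hence no need to verify continuity of the $n$-point $q_t$ for all $n$, which you correctly note must be checked, and which does go through by the same dominated-convergence argument as for $n=2$); what your route buys is self-containedness (no external lemma beyond Theorem \ref{B}, already used in Section \ref{section_local_time}) and an explicit quantitative constant $c=\sum_{n\ge0}(\mu N_t t^{\beta}\Gamma(\beta))^n/\Gamma(1+n\beta)$. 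Your bookkeeping checks out: the $n=1$ case agrees with the general formula since $\Gamma(1+\beta)=\beta\Gamma(\beta)$, the $n=2$ case reproduces \eqref{eq12.1}, the symmetrization factor $n!$ is legitimate because all spatial arguments equal $y$, and the interchange $E\exp\{\mu\alpha\}=\sum_n\frac{\mu^n}{n!}E\alpha^n$ is justified by monotone convergence since $\alpha_x(y,t)\geq0$.
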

A possible way to prove \eqref{eq13.1} is to expand the exponent in a
Taylor series, then to use estimate \eqref{eq6.2}, and to make
calculations similar to \eqref{eq12.1} and formula \eqref{eq10.1}.
However it is easier to apply the following result of
N.I.Portenko.

\begin{lemma}
 Assume that $\{\beta(t), t\in[0, T]\}$ is a non-negative measurable process adapted to a flow $\{\cF_t, t\in[0, T]\}.$ Assume that for $0\leq s\leq t\leq T$
$$
E\left\{\int^t_s\beta(\tau)d\tau/\cF_s\right\}\leq\rho(s, t),
$$
where $\rho(s, t)$ is a non-random integral function satisfying the following conditions

a) $\rho(t_1, t_2)\leq\rho(t_3, t_4)$ if $(t_1, t_2)\subset(t_3, t_4);$

b) $\lim_{h\downarrow {0}}\sup_{0\leq s\leq t\leq
 {h}}\rho(s, t)=0.$

Then for any $\lambda$
$$
E\exp\left\{\lambda\int^T_0\beta(\tau)d\tau\right\}\leq c,
$$
where $c$ depends only on $\lambda, T$ and $\rho.$
\end{lemma}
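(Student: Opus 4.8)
The plan is to prove the classical Khasminskii-type estimate on short time intervals by iterated conditioning, and then to bootstrap to an arbitrary $\lambda$ by chaining this estimate over a finite partition of $[0,T]$ supplied by condition (b).

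First I would fix $0\le a\le b\le T$ and condition on $\cF_a$. Using the identity $\bigl(\int_a^b\beta(\tau)\,d\tau\bigr)^n=n!\int_{a<\tau_1<\dots<\tau_n<b}\beta(\tau_1)\cdots\beta(\tau_n)\,d\tau_1\cdots d\tau_n$ and, since $\beta\ge0$, the conditional monotone convergence theorem together with Tonelli, one gets
$$
E\exp\Bigl\{\lambda\!\int_a^b\!\beta(\tau)\,d\tau\,\Big|\,\cF_a\Bigr\}=\sum_{n\ge0}\lambda^n\,E\!\left[\int_{a<\tau_1<\dots<\tau_n<b}\!\!\beta(\tau_1)\cdots\beta(\tau_n)\,d\tau\;\Big|\;\cF_a\right].
$$
The heart of the argument is the bound $E\bigl[\int_{a<\tau_1<\dots<\tau_n<b}\beta(\tau_1)\cdots\beta(\tau_n)\,d\tau\mid\cF_a\bigr]\le\rho(a,b)^n$, proved by induction on $n$: peel off the innermost variable by first conditioning on $\cF_{\tau_{n-1}}$ (which contains $\cF_a$ and every $\cF_{\tau_i}$, $i<n$, so the factor $\beta(\tau_1)\cdots\beta(\tau_{n-1})$ comes out), apply the hypothesis $E[\int_{\tau_{n-1}}^b\beta(\tau_n)\,d\tau_n\mid\cF_{\tau_{n-1}}]\le\rho(\tau_{n-1},b)\le\rho(a,b)$ using monotonicity (a), and then use the tower property over $\cF_a$. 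Summing the geometric series yields
$$
E\exp\Bigl\{\lambda\!\int_a^b\!\beta(\tau)\,d\tau\,\Big|\,\cF_a\Bigr\}\le\frac{1}{1-\lambda\,\rho(a,b)}\qquad\text{whenever }\lambda\,\rho(a,b)<1 .
$$

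Next, using condition (b) together with the fact that $\rho$ is an integral function (so its increments are absolutely continuous and hence uniformly small over short subintervals), I would choose a partition $0=T_0<T_1<\dots<T_m=T$ with $\rho(T_{i-1},T_i)\le\frac{1}{2\lambda}$ for all $i$; the number $m$ of blocks depends only on $\lambda$, $T$ and $\rho$. On each block the previous step gives $E[\exp\{\lambda\int_{T_{i-1}}^{T_i}\beta\}\mid\cF_{T_{i-1}}]\le2$. Factoring $\exp\{\lambda\int_0^T\beta\}=\prod_{i=1}^m\exp\{\lambda\int_{T_{i-1}}^{T_i}\beta\}$ and repeatedly conditioning — first on $\cF_{T_{m-1}}$ to remove the last factor, then on $\cF_{T_{m-2}}$, and so on down to $\cF_{T_0}$ — the tower property yields $E\exp\{\lambda\int_0^T\beta\}\le2^m=:c$, a constant depending only on $\lambda$, $T$ and $\rho$.

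The routine parts are the combinatorial identity for powers of the integral, the interchanges of expectation with the multiple time integral (all legitimate because $\beta\ge0$, via Tonelli and conditional monotone convergence), and the geometric summation. The point that requires care is the extraction of the finite partition: this is precisely where hypothesis (b) is used, and one must verify that for the given $\lambda$ finitely many subintervals suffice to make $\rho$ below $\tfrac1{2\lambda}$ on each of them, which is where the integral-function structure of $\rho$ enters. Apart from that, the adaptedness of $\beta$ and the inclusion $\cF_a\subseteq\cF_{\tau_{n-1}}$ are the only structural facts needed.
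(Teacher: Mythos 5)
Your argument is correct: it is the standard Khasminskii-type proof (series expansion of the exponential, the inductive bound $E[\int_{a<\tau_1<\cdots<\tau_n<b}\beta(\tau_1)\cdots\beta(\tau_n)\,d\tau\mid\cF_a]\le\rho(a,b)^n$ by peeling off the innermost variable via the tower property and monotonicity (a), geometric summation when $\lambda\rho(a,b)<1$, and then chaining over a finite partition to reach arbitrary $\lambda$). The paper itself gives no proof of this lemma --- it simply refers to Portenko's monograph, Lemma 1.1, where the argument is essentially the one you reconstruct, so there is nothing to contrast beyond that. The only point deserving a remark is the one you already flag: condition (b) as literally stated controls $\rho(s,t)$ only for intervals contained in $[0,h]$, so extracting a partition of all of $[0,T]$ with $\rho(T_{i-1},T_i)\le\frac{1}{2\lambda}$ genuinely requires the ``integral function'' structure of $\rho$ (equivalently, reading (b) as uniform smallness over all subintervals of length at most $h$); with that reading your chaining step, and hence the bound $c=2^m$ depending only on $\lambda$, $T$ and $\rho$, goes through.
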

See \cite{Portenko90}, Lemma 1.1 for the proof.

\begin{proof}[Proof of Proposition 3] Let now $\beta_\ve(t)=(2\ve)^{-1}\1_{|\vf_t(x)-y|<\ve}.$ Similarly to \eqref{eq12.1} we obtain that uniformly in $x, y, \ve$
$$
E\left(\int^t_s(2\ve)^{-1}\1_{|\vf_\tau(x)-y|<\ve}d\tau/\cF_s\right)=
$$
$$
=\int^{t-s}_0\int^{y+\ve}_{y-\ve}(2\ve)^{-1}p_r(\vf_s(x), u)dudr\leq
$$
$$
\leq\int^{t-s}_0\frac{N_Tr}{r^{\frac{\alpha+1}{\alpha}}}dr=N_T(t-s)^{\frac{\alpha-1}{\alpha}}\cdot\left(\frac{\alpha}{\alpha-1}\right)=:\rho(s, t), \ 0\leq s\leq t\leq T.
$$
This implies the uniform in $x, y, \ve$ estimate of
$$
E\exp\left\{\lambda\int^t_0(2\ve)^{-1}\1_{|\vf_s(x)-y|<\ve}ds\right\}.
$$
To conclude the proof, it remains to make $\ve\to0$ and apply Fatou's lemma.

Proposition \ref{prop3} is proved.
\end{proof}

\section{Representation of the derivative}\label{section_main}
Consider equation \eqref{eq1.1}. Assume that $a$ is continuously
differentiable. Then for each  $\omega\in\Omega,$
\eqref{eq1.1} can be considered as an integral equation with
$C^1$-coefficients. So $\vf_t(x)$ is differentiable in $x$ and
$\nabla\vf_t(x)=\frac{\pt\vf_t(x)}{\pt x}$ satisfies a linear
equation
$$
\nabla\vf_t(x)=1+\int^t_0a'(\vf_s(x))\nabla\vf_s(x)ds.
$$
Thus
$$
\nabla\vf_t(x)=\exp\left\{\int^t_0a'(\vf_s(x))ds\right\}.
$$
Applying \eqref{eq9.1} and Proposition \ref{prop2} we get
$$
\nabla\vf_t(x)=\exp\left\{\int_\mbR a'(y)\alpha_x(y,
t)dy\right\}=\exp\left\{\int_\mbR \alpha_x(y, t)da(y)\right\} \
\mbox{a.s.}
$$
Note that generally speaking the exceptional set depends  on $x$ and $t.$ By Fubini's theorem
\begin{equation}
\label{eq15.1}
P\left\{\nabla\vf_t(x)=\exp\left\{\int_\mbR\alpha_x(y,
t)da(y)\right\} \ \mbox{for} \ \lambda\mbox{-a.a.} \ x \right\}=1.
\end{equation}

We will justify representation \eqref{eq15.1} for solution of \eqref{eq1.1},
where a function $a$ is not necessarily $C^1,$ but it has a finite variation. We need some definitions and facts on Sobolev spaces.

\begin{definition}
\label{defn1} A function $f: [a, b]\to\mbR$ belongs to a Sobolev
space $W^1_p([a, b]),$ $ p\geq1,$ if $f$ has an absolutely continuous modification and
$\frac{df}{dx}\in L_p([a, b]).$
\end{definition}
Put
$$
\|f\|_{p, 1}:=\|f\|_{W^1_p([a, b])}:=\|f\|_{L_p([a,
b])}+\left\|\frac{df}{dx}\right\|_{L_p([a, b])}.
$$
It is well known that $(W^1_p([a, b]), \|\cdot\|_{p, 1})$ is a Banach space. So if $\{f_n\}\subset W^1_p([a, b])$ is such that
\begin{equation}
\label{eq16.1}
f_n\to f, \ \mbox{and} \ \frac{df_n}{dx}\to g \ \mbox{as} \ n\to\infty \ \mbox{in} \ L_p,
\end{equation}
then
\begin{equation}
\label{eq16.2}
f\in W^1_p([a, b]), \ g=\frac{df}{dx}.
\end{equation}
\begin{definition}
\label{defn2}
A measurable function $f: \mbR\to\mbR$ belongs to the space $W^1_{p, loc}$  if its restriction to any segment $[a, b]$ lies in $W^1_p([a, b]).$
\end{definition}

The main result of the paper is the following theorem.

\begin{theorem}
\label{thm2}
Assume that $a(x), \ x\in\mathds{R},$ is a measurable  bounded  function and its restriction to any interval has a finite variation. Then $\vf_t(\cdot)\in W^1_{p, loc}$ for any $p\geq1$ a.s. and representation \eqref{eq15.1} holds true for any $t>0.$
\end{theorem}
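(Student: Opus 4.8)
The plan is to approximate $a$ by a sequence of $C^1$ functions $a_n$ and transfer the known representation \eqref{eq15.1} (valid for smooth drift) to the limit. First I would fix a compact interval $[A,B]$ for the $x$-variable and a terminal time $T$; everything will be proved with this localization, and then one lets $[A,B]\uparrow\mbR$. Since $a$ has finite variation on bounded intervals and is bounded, I can choose $a_n\in C^1$ with $\|a_n\|_\infty\leq \|a\|_\infty+1$, $a_n\to a$ pointwise (hence in $L^1_{loc}$), and, crucially, with the total variations $\mathrm{Var}_{[c,d]}(a_n)$ bounded uniformly in $n$ (on each fixed interval), so that the measures $da_n$ converge weakly to $da$ and have uniformly bounded total mass on compacts. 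Let $\vf^n_t(x)$ denote the solution of \eqref{eq1.1} with drift $a_n$ and $\alpha^n_x(y,t)$ its local time. For the smooth drift $a_n$ we already have, by the computation reproduced just before Definition \ref{defn1},
\[
\nabla\vf^n_t(x)=\exp\left\{\int_\mbR \alpha^n_x(y,t)\,da_n(y)\right\}\quad\text{for }\lambda\text{-a.a. }x,\ \text{a.s.}
\]

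The key steps are then: (i) show $\vf^n_t(x)\to\vf_t(x)$ in a strong enough sense (say, in probability, uniformly in $t\in[0,T]$ and $x\in[A,B]$, or along a subsequence a.s.); this should follow from the stability results of \cite{Pilipenko12} together with the uniform bound on $\|a_n\|_\infty$, using that the noise $Z$ is common to all equations and only the drift integral changes. (ii) Deduce convergence of the local times: $\alpha^n_x(y,t)\to\alpha_x(y,t)$, and do so uniformly enough that $\int_\mbR \alpha^n_x(y,t)\,da_n(y)\to\int_\mbR \alpha_x(y,t)\,da(y)$. Here one uses that $\alpha^n_x(\cdot,t)$ and $\alpha_x(\cdot,t)$ are jointly continuous in the space variable (from the Berman-type criterion, Theorem \ref{B}, with the density bound \eqref{eq6.2} which is uniform over drifts with the same $\|a\|_\infty$) and that the $\alpha^n$ converge uniformly on compacts in $y$; combined with weak convergence of $da_n$ to $da$ and uniformly bounded total variation, the integrals converge. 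The uniform moment/exponential bounds of Proposition \ref{prop3} (which are uniform in $x,y$ and hold for each $a_n$ with the same constant) give the uniform integrability needed to pass from convergence in probability to convergence of the quantities $\exp\{\int \alpha^n_x(y,t)\,da_n(y)\}$ and to identify $\nabla\vf^n_t(x)$'s limit. (iii) Identify the limit of $\nabla\vf^n_t(\cdot)$ as the weak derivative of $\vf_t(\cdot)$: since $\vf^n_t(\cdot)\to\vf_t(\cdot)$ in $L^p([A,B])$ and $\nabla\vf^n_t(\cdot)=\partial_x\vf^n_t(\cdot)\to\Phi$ in $L^p([A,B])$ for the limit $\Phi=\exp\{\int_\mbR\alpha_x(y,t)\,da(y)\}$, the closedness property \eqref{eq16.1}–\eqref{eq16.2} of the Sobolev space forces $\vf_t(\cdot)\in W^1_p([A,B])$ with $\partial_x\vf_t=\Phi$. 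The $L^p$-convergence of the derivatives is where Proposition \ref{prop3} does its work: $\sup_n E\int_A^B |\nabla\vf^n_t(x)|^p\,dx<\infty$ gives uniform integrability, and convergence in probability (for a.a. $x$, from steps (i)–(ii)) upgrades to $L^p$.

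**Main obstacle.** The delicate point is step (ii), controlling the local times of the approximating flows uniformly: I need that $\alpha^n_x(\cdot,t)$ converges to $\alpha_x(\cdot,t)$ not merely pointwise in $y$ but in a mode (uniform on compacts, or at least testing against measures of bounded variation) compatible with the only-weakly-convergent integrators $da_n$. Because a function of bounded variation can put mass on a dense set of atoms, pointwise-in-$y$ convergence of the local times is not enough, and one genuinely needs the equicontinuity in $y$ provided by the density estimate \eqref{eq6.2} (uniform over the family $\{a_n\}$ since they share an $L^\infty$-bound) to get the tightness/uniform-modulus statement that makes $\int\alpha^n_x\,da_n\to\int\alpha_x\,da$. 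A secondary subtlety is that the exceptional null set in \eqref{eq15.1} depends on $x$ and $t$; the Fubini argument already used for the smooth case must be redone carefully so that, after the limit, the representation holds for each fixed $t>0$ and $\lambda$-a.a. $x$ simultaneously on an event of probability one, which is the precise form asserted in Theorem \ref{thm2}.
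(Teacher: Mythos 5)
Your proposal follows essentially the same route as the paper: mollify $a$ keeping $\|a_n\|_\infty$ and $\Var(a_n)$ uniformly bounded, use the stability result of \cite{Pilipenko12} for convergence of the flows, pass to the limit in $\int_\mbR\alpha^n_x(y,t)\,da_n(y)$ via the uniform-in-$n$ $L_2$-equicontinuity of the local times (from the density bound \eqref{eq6.2}) together with weak convergence of $da_n$, invoke Proposition \ref{prop3} for uniform integrability of the exponentials, close the argument with \eqref{eq16.1}--\eqref{eq16.2}, and localize for drifts of only locally finite variation. The delicate point you flag is exactly the one the paper addresses, which it implements by a dissection argument comparing $\alpha^n_x(y,t)$ to its averages over small intervals (i.e.\ to occupation times of intervals) before integrating against $da_n$.
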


\begin{corollary}
For all
$\left\{x_1,x_2\right\}\subset\mathds{R}, \ x_1\neq x_2,$
\begin{equation}\label{eq_coalescence}
P\left\{\varphi_t(x_1)\neq\varphi_t(x_2), \ t\geq0\right\}=1.
\end{equation}

Relation (\ref{eq_coalescence}) can be obtained similarly to that of \cite{Aryasova+12}, Theorem 1, using the fact that
$$
\varphi_t(x_2)-\varphi_t(x_1)=\int_{x_1}^{x_2}\frac{\partial \varphi_t(y)}{\partial y}dy>0.
$$
\end{corollary}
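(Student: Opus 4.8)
The plan is to deduce the non-coalescence directly from the derivative representation of Theorem \ref{thm2}, combined with the monotonicity and the one-point coalescing structure recorded in Proposition \ref{prop1}. Without loss of generality I assume $x_1<x_2$. By part 5a) of Proposition \ref{prop1} one always has $\varphi_t(x_1)\le\varphi_t(x_2)$, so the assertion $\varphi_t(x_1)\neq\varphi_t(x_2)$ is equivalent to the strict inequality $\varphi_t(x_1)<\varphi_t(x_2)$, and \eqref{eq_coalescence} is equivalent to $P\{\sigma_{x_1,x_2}=\infty\}=1$, where $\sigma_{x_1,x_2}$ is the coalescence time from part 4).

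First I would argue at a fixed time $t>0$. By Theorem \ref{thm2}, a.s. the map $y\mapsto\varphi_t(y)$ belongs to $W^1_{p,loc}$, hence has an absolutely continuous modification, and by \eqref{eq15.1} for $\lambda$-a.a. $y$
$$
\nabla\varphi_t(y)=\exp\left\{\int_{\mbR}\alpha_y(z,t)\,da(z)\right\}.
$$
Since the exponent is finite a.s. (the derivative is a genuine $L_p$ function), the right-hand side is strictly positive, so $\nabla\varphi_t(y)>0$ for $\lambda$-a.a. $y$. Using absolute continuity on $[x_1,x_2]$ I then write
$$
\varphi_t(x_2)-\varphi_t(x_1)=\int_{x_1}^{x_2}\nabla\varphi_t(y)\,dy>0,
$$
the integrand being positive for a.a. $y\in(x_1,x_2)$. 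Thus for each fixed $t>0$ one gets $P\{\varphi_t(x_1)<\varphi_t(x_2)\}=1$.

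Next I would upgrade this to all $t$ simultaneously. Intersecting the full-measure events over $t\in\mbQ_+$ gives $P\{\varphi_t(x_1)<\varphi_t(x_2)\ \forall t\in\mbQ_+\}=1$. On this event necessarily $\sigma_{x_1,x_2}=\infty$: were $\sigma_{x_1,x_2}$ finite, part 4) of Proposition \ref{prop1} would force the two trajectories to coincide for all $t\ge\sigma_{x_1,x_2}$, and choosing a rational $t>\sigma_{x_1,x_2}$ would contradict the strict inequality. Since $\{\sigma_{x_1,x_2}=\infty\}$ means exactly that $\varphi_t(x_1)<\varphi_t(x_2)$ for every $t\ge0$, this yields \eqref{eq_coalescence}.

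The only delicate point is the strict positivity of the derivative: one must rule out that the exponent $\int_{\mbR}\alpha_y(z,t)\,da(z)$ degenerates to $-\infty$ on a set of $y$ of positive measure, which would let $\nabla\varphi_t$ vanish on a subinterval and permit coalescence. This is supplied by Theorem \ref{thm2}, which presents $\nabla\varphi_t$ as an $L_p$ function equal a.e. to the positive exponential; finiteness of the integral rests on $a$ having finite variation over the a.s. bounded range of $(\varphi_s(x))_{s\in[0,t]}$ together with the exponential integrability of the local time from Proposition \ref{prop3}. The genuinely nontrivial content is not any analytic estimate but the passage from fixed $t$ to all $t$, which relies entirely on the monotone coalescing structure of the flow.
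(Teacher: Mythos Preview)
Your argument is correct and follows exactly the route the paper indicates: positivity of the exponential derivative from Theorem \ref{thm2} yields $\varphi_t(x_2)-\varphi_t(x_1)=\int_{x_1}^{x_2}\nabla\varphi_t(y)\,dy>0$ for each fixed $t$, and you then supply the passage to all $t$ via rationals and the coalescing property of Proposition \ref{prop1}, item 4), which the paper leaves to the reference \cite{Aryasova+12}. One small point you could make explicit is why the integral formula holds for the \emph{specific} endpoints $x_1,x_2$ rather than only for the absolutely continuous modification: since the version of $\varphi_t(\cdot)$ fixed in Proposition \ref{prop1} is monotone and c\`adl\`ag in $x$, and agrees a.e.\ with an absolutely continuous function, it must in fact coincide with that function everywhere.
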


\begin{proof}[Proof of Theorem \ref{thm2}]
Assume at first that $a$ is a function of bounded variation. Set
$$
a_n(x):=\int_\mbR a(y)g_n(x-y)dy,
$$
where
$$
g_n(x)=n g(nx), \ g\in C^\infty_0(\mbR), \ g\geq0, \ \mbox{and} \ \int_\mathds{R}g(z)dz=1.
$$
Then
$$
\sup_{n,x}|a_n(x)|\leq\|a\|_\infty=\sup_x|a(x)|,
$$
$a_n\in C^\infty(\mbR),$  $a_n(x)\to a(x), n\to\infty$, for all points of continuity of $a$, and
\begin{equation}
\label{eq17.2}
\Var(a_n)\leq\Var(a).
\end{equation}

Let $\vf^n_t(x)$ be a solution of \eqref{eq1.1} with $a_n$ instead
of $a, \ \alpha^n_x(y, t)$   {be} its local time. Then
$$
P\left\{\nabla\vf^n_t(x)=\exp\left\{\int_\mbR\alpha^n_x(y,
t)da_n(y)\right\} \ \mbox{for} \ \lambda\mbox{-a.a} \ x\right\}=1.
$$
It follows from \cite{Pilipenko12} that
$$
\forall \ x \ \forall \ T\geq0: \ \sup_{t\in[0, T]}|\vf^n_t(x)-\vf_t(x)|\overset{P}{\rightarrow}0, \ n\to\infty.
$$

The uniform boundedness of $\{a_n\}$ implies
$$
\forall \ p\geq1: \ \sup_x\sup_{t\in[0, T]}E|\vf^n_t(x)-\vf_t(x)|^p<\infty.
$$
So
$$
E\int^b_a|\vf^n_t(x)-\vf_t(x)|^pdx\to0, \ n\to\infty
$$
for any $t>0, a\leq b.$

Prove that

$\forall \ p\geq0 \ \forall \ a\leq b \ \forall \ t>0:$
$$
E\int^b_a\left|\exp\left\{\int_\mbR\alpha^n_x(y, t)da_n(y)\right\}-\exp\left\{\int_\mbR\alpha_x(y, t)da(y)\right\}\right|^pdx\to0, n\to\infty,
$$
By Proposition \ref{prop3}, \eqref{eq17.2}, and Jensen's inequality, it suffices to check the convergence
$$
\forall \ x, t: \ \int_\mbR\alpha^n_x(y, t)da_n(y)\to\int_\mbR\alpha_x(y, t)da(y), n\to\infty,
$$
in probability or in $L_2-$sense.

Assume at first that the function $a$ has a finite support. Let
$R$ be such that $\supp \ {a} \subset[-R, R], \ \supp \ {a_n}
\subset[-R, R]. $

For simplicity denote $a$ by $a_0$ and $\alpha$ by $\alpha^0$.

Let $-R=y_0<y_1<\ldots< y_m=R$ be a dissection of $[-R, R].$ Then
$$
E\left|\int_{\mbR}\alpha^n_x(y, t)da_n(y)-\int_{\mbR}\alpha_x(y, t)da(y)\right|=E\left|\int^R_{-R}\ldots-\int^R_{-R}\ldots\right|\leq
$$
$$
\leq
\sum_{0\leq j<m}E\left|\int_{y_j}^{y_{j+1}}
\Bigg(\alpha^n_x(y, t)-\frac{\int^{y_{j+1}}_{y_j}\alpha^n_x(z, t)dz}
{\Delta y_j}
\Bigg)da_n(y)\right|+
$$
$$
+
\sum_{0\leq j<m}E\left|\int_{y_j}^{y_{j+1}}
\Bigg(\frac{\int^{y_{j+1}}_{y_j}(\alpha^n_x(z, t)-\alpha_x(z, t))dz}
{\Delta y_j}
\Bigg)da_n(y)\right|+
$$
$$
+
\sum_{0\leq j<m}E\left|\int_{y_j}^{y_{j+1}}
\Bigg(\frac{\int^{y_{j+1}}_{y_j}\alpha_x(z, t)dz}
{\Delta y_j}
\Bigg)(da_n(y)-da(y))\right|+
$$
$$
+ \sum_{0\leq j<m}E\left|\int_{y_j}^{y_{j+1}}
\Bigg(\frac{\int^{y_{j+1}}_{y_j}\alpha_x(z, t)dz} {\Delta
y_j}-\alpha_x(y, t) \Bigg)da(y)\right|\leq
$$
$$
\leq
2\sup_{l\geq0}\max_{0\leq j<m}\sup_{y\in[y_j, y_{j+1}]}
E\Bigg|\alpha^l_x(y, t)-\frac{\int^{y_{j+1}}_{y_j}\alpha^l_x(z, t)dz}
{\Delta y_j}
\Bigg|\Var a_l+
$$
$$
+
\max_{0\leq j<m}
E\Bigg|\int^{y_{j+1}}_{y_j}\alpha^n_x(z, t)dz-\int^{y_{j+1}}_{y_j}\alpha_x(z, t)dz
\Bigg|
\frac{\Var a_n}{\min_{0\leq j<m-1}\Delta y_j}+
$$
\begin{equation}
\label{eq20.1}
+
\max_{0\leq j<m}
E
\frac{\int^{y_{j+1}}_{y_j}\alpha_x(z, t)dz}
{\Delta y_j}
\sum^{m-1}_{k=0}|\Delta_ka_n-\Delta_ka_0|=I_1+I_2+I_3,
\end{equation}
where $\Delta y_j=y_{j+1}-y_j; \ \Delta_ka_n=(a_n(y_{k+1})-a_n(y_k)).$

Estimate each term in the r.h.s. of \eqref{eq20.1}.

   By \eqref{eq10.1} and \eqref{eq6.2}, for any fixed $t\geq 0,\ x\in\mbR,$ the  processes $\alpha^l_x(y, t), y\in[-R, R],$  are equicontinuous in $L_2$ (and so in $L_1$) uniformly in $l\geq0, $  i.e.

$
\forall \ \ve>0 \ \forall \ l\geq0 \ \exists \ \delta_1=\delta_1(\ve)>0 \ \forall \ \{y', y''\}\subset[-R, R], \
|y'-y''|<\delta: $
$$
\sqrt{E(\alpha^l_x(t, y')-\alpha^l_x(t, y''))^2}<\ve.
$$
Hence, if $\max_{0\leq j<m}|y_{j+1}-y_j|<\delta_1$, where $\delta_1=\delta_1\left(\ve/(6\sup_{l\geq0}\Var a_l)\right)$ then the term $I_1$ in \eqref{eq20.1} is less than $\frac{\ve}{3}.$

Consider $I_2.$ By the definition of the local time (see (\ref{eq9.1})):
$$
\int^{y_{j+1}}_{y_j}\alpha^n_x(z, t)dz=\int^t_0\1_{\vf^n_z(x)\in[y_j, y_{j+1}]}dz \ \mbox{a.s.}
$$
Therefore,
\begin{multline*}
E\left|\int^{y_{j+1}}_{y_j}\alpha^n_x(z,
t)dz-\int^{y_{j+1}}_{y_j}\alpha_x(z, t)dz\right|\\
\leq E\int^t_0\left|\1_{\vf^n_z(x)\in[y_j,
y_{j+1}]}-\1_{\vf_z(x)\in[y_j, y_{j+1}]}\right|dz.
\end{multline*}

Taking into account that by \cite{Pilipenko12},
$$
\sup_{z\in[0, t]}|\vf^n_z(x)-\vf_z(x)|\overset{P}{\rightarrow}0, \ n\to\infty,
$$
we get
$$
\1_{\vf_z(x)\notin\{y_j, y_{j+1}\}}\left(\1_{\vf^n_z(x)\in[y_j, y_{j+1}]}-\1_{\vf_z(x)\in[y_j, y_{j+1}]}\right)\overset{P}{\rightarrow}0,\ n\to\infty,
$$
for any $z.$

Since
$$
E\int^t_0\1_{\vf_z(x)\in\{y_j, y_{j+1}\}}dz=\int_{\{y_j\}\cup\{y_{j+1}\}}E\alpha_x(z, t)dz=0,
$$
we have the convergence
$$
\left(\1_{\vf^n_z(x)\in[y_j, y_{j+1}]}-\1_{\vf_z(x)\in[y_j, y_{j+1}]}\right)\overset{P}{\rightarrow}0,\ n\to\infty,
$$
and consequently  the convergence
\begin{equation}
\label{eq21.1}
E\Bigg|\int^{y_{j+1}}_{y_j}\alpha^n_x(z, t)dz-\int^{y_{j+1}}_{y_j}\alpha_x(z, t)dz
\Bigg|\to0, \ n\to\infty.
\end{equation}

Recall that
\begin{equation}
\label{eq22.2}
a_n(y)\to a_0(y), \ n\to\infty,
\end{equation}
if $y$ is a point of continuity of $a_0.$ Select a dissection $\{y_k\}$ such that all $\{y_k\}$ are points of continuity of $a_0,$ and $\max_j\Delta y_j<\delta_1.$ Use \eqref{eq21.1} and \eqref{eq22.2} and select $n_0$ such that for any $n\geq n_0:$
$$
\frac{\sup_{p\geq0}\Var a_p}{\min_{0\leq j<m}\Delta y_j}\cdot\max_{0\leq j<m}
E\Bigg|\int^{y_{j+1}}_{y_j}\alpha^n_x(z, t)dz-\int^{y_{j+1}}_{y_j}\alpha_x(z, t)dz
\Bigg|<\frac{\ve}{3}
$$
and
$$
\sup_{-R\leq z\leq R}
E\alpha_x(z, t)\cdot\sum^{m-1}_{k=0}|\Delta_ka_n-\Delta_ka_0|<\frac{\ve}{3}.
$$
So the r.h.s. of \eqref{eq20.1} is less than $\ve$ and the theorem is proved for finite $a.$

Let now $a$ be an arbitrary function that satisfies conditions of Theorem \ref{thm2}.

Let $g\in C^\infty_0(\mbR); g(x)=1,\ |x|\leq1.$ Put $g_n(x)=g(x/n), a_n(x)=g_n(x)a(x).$ Let $\vf^n_t(x)$ be a solution of \eqref{eq1.1} with a drift coefficient equal to $a_n.$

Observe that by uniqueness of the solution we have the equality
\begin{equation}
\label{eq23.1}
\vf^n_t(x)=\vf_t(x)
\end{equation}
for a.a. $\omega$ from the event $\{\sup_{z\in[0, t]}|\vf_z(x)|\leq n\}.$

Let $[c, d]$ be an arbitrary interval. Denote $n_0=n_0(\omega)=\max_{z\in[0, t]}(|\vf_z(c)|+|\vf_z(d)|).$ Making use of \eqref{eq23.1}, Proposition \ref{prop1}, and Fubini's theorem we obtain the equality $\vf_t(x)=\vf^n_t(x)$ valid for all $n\geq n_0$, a.a. $\omega,$ and $\lambda$-a.a. $x\in[c, d].$ Since $a_n$ is finite, $\vf^n_t(\cdot)\in W^1_p([c, d])$ a.s. Thus $\vf_t(\cdot)\in W^1_p([c,d])$ a.s. and its derivatives coincide a.s. with that of $\vf^n_t$ if $n\geq n_0.$  The definition of the local time entails that $\alpha^n_x(y, t)=\alpha_x(y, t),\ n\geq n_0,$  for $\lambda$-a.a. $y\in[c,d]$ with probability 1. So formula \eqref{eq15.1} holds true.

Theorem \ref{thm2} is proved.
\end{proof}


\end{document}